\documentclass[12pt]{article}
\usepackage{amsmath,amssymb,amsthm}
\usepackage[a4paper, margin=2.5cm]{geometry}

\title{Polynomial contractions of $\mathbb C^d $ and degree growth}

\author{
Dmitrii Korshunov\thanks{%
IMJ-PRG, Sorbonne Université, Paris, France.
\texttt{korshunov@imj-prg.fr}}
}

\newtheorem{proposition}{Proposition}
\newtheorem{definition}{Definition}
\newtheorem{theorem}{Theorem}
\newtheorem{question}{Question}
\newtheorem{remark}{Remark}
\newcommand{\aut}{\mathrm{Aut\,}}

\begin{document}
\maketitle

\begin{abstract}
 We give a simple example of a polynomial contraction automorphism of $\mathbb C^d$, $ d\ge 3 $, with unbounded degree growth. Combined with Poincar\'e--Dulac theorem it provides an algebraic automorphism of $\mathbb C^d$, $ d\ge 3 $, which is holomorphically but not algebraically linearizable.
\end{abstract}

\section{Introduction}

This note answers a question of Verbitsky and Ornea in connection with their work on GAGA-type theorems for complex manifolds equipped with a holomorphic contraction \cite{OV}.

\begin{definition}
An automorphism $\gamma\in \aut\mathbb C^d$ is called a contraction if $ 0 $ is the unique fixed point and  $\lim_{n\to \infty}\gamma^n(x)=0$ for all $x\in \mathbb C^n$.
\end{definition}

\begin{definition}
Suppose a polynomial automorphism $ \gamma $ of $ \mathbb C^n $ is given by \\  $(x_1,\dots x_d)\mapsto (P_1(x_1,\dots x_d),\dots (P_d(x_1,\dots x_d))$. We define the degree of $ \gamma $ to be the largest among degrees of $ P_i $.
\end{definition}

\begin{question}
Is there a polynomial contraction of $ \mathbb C^d $, such that degree of $\gamma^n$ grows to infinity with $ n\to\infty $?
\end{question}

For each $ d\ge 3 $ we construct an example of such contraction, and  show that for $ d= 2 $ they do not exist. The property of an algebraic automorphism to be of bounded degree growth is an algebraic conjugation invariant. On the other hand, Poincar\'e-Dulac theorem applied to our examples guarantees that they are holomorphically conjugate to a linear map. Thus we obtain examples of automorphisms that are holomorphically conjugate to a linear map but not algebraically conjugate to any linear map.

\subsection*{Acknowledgement}

The author is grateful to Misha Verbitsky, Yulia Gorginyan, Egor Yasinsky, and Mikhail Zaidenberg for their help during the preparation of this note. The author was supported by the European Research Council (ERC) Grant No. 101045750 (HodgeGeoComb).
\section{Algebraic groups of automorphisms and bounded degree}

We will need the notion of strict algebraicity of a polynomial automorphism, which captures the property of having bounded degree growth.

\begin{definition}
Let $\gamma\in \aut \mathbb A^d$. If there exists an algebraic action of an algebraic group $\Phi: G\times\mathbb A^d\to\mathbb A^d$, such that $\gamma(\cdot)=\Phi(g,\cdot)$ for some $g\in G$, then $\gamma$ is called strictly algebraic.
\end{definition}

\begin{proposition}
A polynomial automorphism $\gamma\in \aut \mathbb A^d$ is strictly algebraic if and only if $ \sup_n \deg \gamma^n <+\infty $. 
\end{proposition}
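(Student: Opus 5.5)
For the forward direction, suppose $\gamma=\Phi(g,\cdot)$ for an algebraic action $\Phi\colon G\times\mathbb A^d\to\mathbb A^d$ of an algebraic group $G$. Then $\gamma^n=\Phi(g^n,\cdot)$. The plan is to bound the degree uniformly over all of $G$, using the fact that $G$ is quasi-projective and hence Noetherian.

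Write $\Phi=(\Phi_1,\dots,\Phi_d)$ with $\Phi_i\in\mathcal O(G)[x_1,\dots,x_d]$. This needs some care when $G$ is not affine. In that case I would cover $G$ by finitely many affine opens $U_j$. On each $U_j$ the map $\Phi$ restricts to a morphism $U_j\times\mathbb A^d\to\mathbb A^d$. Its coordinates are regular functions on $U_j\times\mathbb A^d$, that is, elements of $\mathcal O(U_j)\otimes\mathbb C[x_1,\dots,x_d]$. Each such element is a finite sum, so it is a polynomial in $x$ of bounded degree $D_j$ with coefficients in $\mathcal O(U_j)$. Specializing to any $h\in U_j$ gives $\deg\Phi(h,\cdot)\le D_j$. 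Therefore $\sup_{h\in G}\deg\Phi(h,\cdot)\le\max_j D_j<\infty$, and in particular $\sup_n\deg\gamma^n<\infty$.

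For the converse, suppose $\deg\gamma^n\le D$ for all $n$. Let $\mathcal A_D\subset\aut\mathbb A^d$ be the set of automorphisms $f$ with $\deg f\le D$ and $\deg f^{-1}\le D'$, where $D'=D^{d-1}$ is the known bound $\deg f^{-1}\le(\deg f)^{d-1}$. This bound puts all iterates $\gamma^n$, for $n\in\mathbb Z$, inside a single $\mathcal A_{D}$.

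The set $\mathcal A_D$ is a locally closed subset of a finite-dimensional affine space of coefficient pairs $(f,f^{-1})$: it is cut out by the polynomial equations $f\circ h=\mathrm{id}$ and $h\circ f=\mathrm{id}$. Composition and inversion are morphisms wherever they land back in some $\mathcal A_{D''}$. So I would take $G$ to be the Zariski closure of $\{\gamma^n:n\in\mathbb Z\}$ inside $\mathcal A_{D}$, viewed within the ind-group $\aut\mathbb A^d=\bigcup_D\mathcal A_D$.

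The key step is to show that $G$ is a closed subgroup of bounded degree, and hence an algebraic group. First, the map $f\mapsto\gamma\circ f$ sends $\mathcal A_{D}$ into some $\mathcal A_{D''}$ and is a morphism. It maps the set $\{\gamma^n\}$ to itself, so it maps the closure $G$ into $G$. The same argument applies with $\gamma^{-1}$. Next, fix $h\in G$. The set $\{f: f\circ h\in G\}$ is closed and contains all $\gamma^n$, because $\gamma^n\circ h=h\circ\gamma^n$ lies in the image of $G$ under the morphism $f\mapsto f\circ\gamma^n$. Hence it contains $G$. Similarly, inversion preserves $G$. Thus $G$ is a group of automorphisms all of degree at most $D$.

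Its group structure is given by morphisms, since composition of polynomial maps of bounded degree is polynomial in the coefficients. So $G$ is an affine algebraic group. It acts algebraically on $\mathbb A^d$ through the evaluation map, which is polynomial in both the coefficients and the point. Finally $\gamma\in G$, so $\gamma$ is strictly algebraic.

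The main obstacle I expect is the closure argument: justifying that composition restricted to bounded-degree pieces is a morphism into a bounded-degree piece, and that the closure of a subgroup stays in bounded degree. This is the standard ind-group fact that a subgroup of $\aut\mathbb A^d$ of bounded degree has an algebraic closure. If I were short on space, I would cite this fact from Furter--Kraft or Bass--Connell--Wright.
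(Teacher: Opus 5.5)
Your proposal is correct and follows essentially the paper's route: the forward direction is the paper's local-finiteness argument unpacked by hand via a finite affine cover with coordinates in $\mathcal O(U_j)[x_1,\dots,x_d]$, and the converse is the paper's Zariski-closure (ind-group) argument, this time with closure under composition and inversion actually checked. One bookkeeping fix is needed: impose the same bound $D'=\max(D,D^{d-1})$ on both $\deg f$ and $\deg f^{-1}$ in $\mathcal A$, because with your asymmetric definition neither the negative iterates $\gamma^{-n}$ nor the inversion map stay inside $\mathcal A_D$ (also, $\gamma^n\circ h\in G$ already follows from your left-translation step, so the unproved commutation claim is unnecessary).
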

\begin{proof}
Let $\aut_{\le e} $ be the (constructible) set of the space of all $d$-tuples of polynomials of degree $ \le e$, naturally identified with $ \mathbb A^{d\binom{d+e}{e}}$. If $\deg \gamma^n\le e$ for all $n\ge 0$, then take as $G$ the Zariski closure of the set $\{\gamma^n: n\ge 0\} $ inside $\aut_{\le e} $. It is left to note that a constructible group over $ \mathbb C $ is actually algebraic.

Another way to see it, is to take the ind-Zariski closure of $\{\gamma^n: n\ge 0\} $ in $\aut \mathbb A^d $ with its standard structure of an ind-group and see that when degree is bounded it lands to the finite dimensional ind-subgroup, that is an algebraic group. For a comprehensive treatment of groups of automorphisms as ind-groups see \cite{FK}.

To prove the converse, recall that an action of an algebraic group $G$ on the coordinate ring of an affine variety is locally finite \cite[Proposition 1.9]{borel}. That is, each polynomial $f \in \mathbb{C}[x_1,\dots x_d]$ belongs to a finite dimensional $G$-invariant subspace $ G\cdot f $ (indeed, its dimension is bounded from above by $\dim G$). Now choose the coordinate polynomials $x_1,\dots x_d$, and take the smallest subspace of the filtration in which all $G\cdot x_1,\dots G\cdot x_d$ lie.

\end{proof}

\begin{remark}
If $ \gamma $ is strictly algebraic then $F\circ \gamma\circ F^{-1}$ is also strictly algebraic for any polynomial automorphism $F\in \aut \mathbb A^d  $.
\end{remark}

\section{Polynomial contractions}

\begin{theorem}
Any contraction automorphism $\gamma$ of $\mathbb C^2$ has bounded degree growth.
\end{theorem}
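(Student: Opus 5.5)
The approach uses the Jung--van der Kulk structure of $\aut\mathbb C^2$ and the Friedland--Milnor dichotomy. Every polynomial automorphism of $\mathbb C^2$ is conjugate, by a polynomial automorphism, either to an elementary (triangular) map or to a composition of generalized Hénon maps. In the Hénon case $\deg \gamma^n=(\deg\gamma)^n$ grows, and in the elementary case the degree is bounded. By the Remark, bounded degree growth is invariant under polynomial conjugation. So it suffices to show that a contraction cannot be conjugate to a composition of Hénon maps.

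\textbf{Step 1.} Recall that $\gamma$ is conjugate either to an affine or triangular map $(x,y)\mapsto(ax+p(y),\,by+c)$, or to a cyclically reduced word of generalized Hénon maps $h_j(x,y)=(y,\,p_j(y)-\delta_j x)$ with $\deg p_j\ge 2$. In the first case we are done: for a triangular map, $\gamma^n$ has first coordinate $a^nx+\sum_k(\dots)p(b^ky+c_k)$, which has degree at most $\deg p$ for all $n$.

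\textbf{Step 2.} In the Hénon case, use dynamics to contradict contraction. For a composition $h$ of generalized Hénon maps of total degree $D\ge2$, the standard filtration $V^+=\{|y|\ge\max(|x|,R)\}$ satisfies $h(V^+)\subset V^+$ for large $R$. On $V^+$ we have $|y\circ h|\ge |y|^D/2$, so the orbits of points in $V^+$ escape to infinity. This holds for the conjugated map $h=F\circ\gamma\circ F^{-1}$. Since $F$ is a homeomorphism of $\mathbb C^2$ that is proper and fixes infinity, the points $F^{-1}(V^+)$ have $\gamma$-orbits tending to infinity. This contradicts $\gamma^n(x)\to 0$. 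Hence a contraction is never of Hénon type.

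\textbf{Step 3.} Conclude that $\gamma$ is conjugate to a triangular map, so $\sup_n\deg\gamma^n<\infty$ by the Remark. Equivalently, by the Proposition, $\gamma$ is strictly algebraic.

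The main obstacle is Step 2. It requires the escaping-filtration estimate for arbitrary compositions of generalized Hénon maps, not just a single one, and this needs Friedland--Milnor's normal form. I would cite Friedland--Milnor for both the dichotomy and the filtration. As a fallback, I would use the positivity of dynamical degree $\lim(\deg\gamma^n)^{1/n}>1$ together with the Bedford--Smillie fact that Hénon-type maps have unbounded orbits.
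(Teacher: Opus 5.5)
Your proof is correct, but it excludes the H\'enon case by a different mechanism than the paper. Both arguments start from the Friedland--Milnor trichotomy and the conjugation invariance of bounded degree growth, and both handle the affine and elementary cases essentially the same way.

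The paper then rules out loxodromic maps through entropy. By Smillie's theorem a composition of H\'enon maps has topological entropy $\log(\deg)>0$, and the paper asserts that a contraction has zero entropy because it has no periodic points besides the origin. That last implication is not automatic. It implicitly needs further input: a count of periodic points (Friedland--Milnor's count of fixed points of $h^n$, or the Bedford--Lyubich--Smillie growth of periodic saddles), or else the variational principle together with the fact that $\delta_0$ is the only invariant probability measure of a contraction.

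You replace all of this with the escaping filtration. The step you flag as the main obstacle is actually routine. For each factor $h_j(x,y)=(y,p_j(y)-\delta_j x)$ one has $|p_j(y)-\delta_j x|\ge 2|y|$ on $V^+=\{|y|\ge\max(|x|,R)\}$ once $R$ is large. So a single $R$ serves all factors at once, $h(V^+)\subset V^+$, and every point of $V^+$ escapes. The exact constant in $|y\circ h|\ge |y|^D/2$ depends on leading coefficients, but it is irrelevant: any expansion $|y\circ h|\ge 2|y|$ suffices. Properness of $F^{-1}$ then gives unbounded $\gamma$-orbits.

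Your route is more elementary and self-contained, and it needs nothing about the fixed point, its multipliers, or entropy. It in fact proves that every polynomial automorphism of $\mathbb C^2$ whose forward orbits are all bounded has bounded degree growth. The paper's route ties the conclusion to periodic-point and entropy theory instead. That is heavier, but it generalizes in a different direction, towards automorphisms with few periodic points.
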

\begin{proof}
By results of Milnor and Friedland \cite{FM}, any $\gamma\in\aut \mathbb C^2$ is conjugate to either
\begin{enumerate}
\item Affine, with constant degree-growth.
\item Elementary, that is of the form $e(x,y)=(ax+P(y),by+c)$, with constant degree growth.
\item Loxodromic, i.e. a finite composition of Hénon automorphisms $h_i(x,y)=(ay+P_i(x),x)$, $ \deg P_i\ge 2 $, with exponential degree growth .
\end{enumerate}

On the other hand, by a theorem of Smillie \cite{smillie} and Theorem 4.1 in \cite{FM} the topological entropy of a loxodromic automorphism equals the logarithm of its degree. But a contraction does not have periodic points except the origin, hence its topological entropy must be zero. Hence a loxodromic automorphism cannot be a contraction.
\end{proof}

\begin{remark}
Note that Cantat and Dujardin \cite{cantat} prove that any two holomorphically conjugare loxodromic automorphisms of $\mathbb C^2$ are algebraically conjugate.
\end{remark}

Contrary to the two-dimensional case, in dimension $ d>2 $ there are polynomial contractions with unbounded growth of degree. This example is an affine-triangular automorphism in the sence of \cite{blanc}.

\begin{theorem}
An automorphism of $\mathbb A^3$ given by
$$\gamma: (x,y,z)\mapsto  \left(\lambda_1(y+xz),\lambda_2 x,\lambda_3 z\right), 0<\lambda_i<\frac{1}{2} $$
is a contraction and has linear degree growth.
\end{theorem}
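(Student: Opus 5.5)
Since $z$ evolves independently, I first write $z_n=\lambda_3^n z_0$. The pair $(x,y)$ then evolves by a non-autonomous linear recursion whose coefficients depend on $z_0$:
\[
x_{n+1}=\lambda_1 y_n+\lambda_1 z_n x_n,\qquad y_{n+1}=\lambda_2 x_n .
\]
So $v_n=(x_n,y_n)^T$ satisfies $v_{n+1}=A_n v_n$ with
\[
A_n=\begin{pmatrix}\lambda_1\lambda_3^n z_0 & \lambda_1\\ \lambda_2 & 0\end{pmatrix}.
\]
The map $\gamma$ is invertible: $z=w/\lambda_3$, $x=v/\lambda_2$, and $y=u/\lambda_1-xz$. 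The unique fixed point is $0$: from $z=\lambda_3 z$ we get $z=0$, then $x=\lambda_1 y$ and $y=\lambda_2 x$, which force $x=y=0$ since $\lambda_1\lambda_2\neq1$.

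\textbf{Contraction.} Fix an initial point and pick $N$ with $|\lambda_3^n z_0|<1$ for $n\ge N$. For $n\ge N$, in the sup norm,
\[
\|A_n\|\le \max(\lambda_1(|\lambda_3^n z_0|+1),\lambda_2)<\max(2\lambda_1,\lambda_2)=:\rho<1,
\]
using $\lambda_i<\tfrac12$. This is where the hypothesis $\lambda_i<\frac12$ enters. Hence $\|v_n\|\le\rho^{n-N}\|v_N\|\to0$, and $z_n\to0$ as well. (A more careful bound with $|\lambda_3^nz_0|\to0$ would work too.)

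\textbf{Degree growth.} I track degrees in all variables. Let $a_n=\deg x_n$ and $b_n=\deg y_n$ as polynomials in $(x,y,z)$. The recursion gives $b_{n+1}=a_n$ and $a_{n+1}\le\max(b_n,a_n+1)$, so $a_n\le n+1$ by induction. For the lower bound, I look at the top $z$-degree part. I claim $x_n$ contains the monomial $c_n x z^{n}$ with $c_n\neq0$, and that $\deg_z x_n= n$ (more precisely, $x_n$ has $z$-degree at most $n$ and $y_n$ has $z$-degree at most $n-1$). In $x_{n+1}=\lambda_1 y_n+\lambda_1\lambda_3^n z\,x_n$, the term $\lambda_1\lambda_3^nz\,x_n$ contributes $\lambda_1\lambda_3^n c_n x z^{n+1}$. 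The term $y_n=\lambda_2x_{n-1}$ has $z$-degree at most $n-1$, so it cannot cancel this monomial. Hence $c_{n+1}=\lambda_1\lambda_3^n c_n\neq0$, starting from $c_0=1$. This gives $\deg\gamma^n\ge n+1$, so together with the upper bound, $\deg\gamma^n=n+1$ for $n\ge1$, which is exactly linear growth.

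The main obstacle is making the non-cancellation argument clean. Choosing the monomial $xz^n$ and inducting on $z$-degree handles it, since all coefficients are products of positive $\lambda$'s. Finally, by the Proposition, $\gamma$ is not strictly algebraic. By the Remark, no polynomial conjugate of $\gamma$ is linear.
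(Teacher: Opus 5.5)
Your proof is correct and follows essentially the same route as the paper: the closed form $z_n=\lambda_3^n z_0$, then a crude norm estimate showing that $(x_n,y_n)$ decays geometrically once $|\lambda_3^n z_0|$ is small, then the degree recursion giving $\deg\gamma^n=n+1$. For the norm estimate you use the sup-norm of the transfer matrix and $2\lambda_1<1$, while the paper uses the scalar recursion for $x_n$ and $2\lambda_2<1$. Your tracking of the monomial $xz^n$ makes explicit the non-cancellation the paper leaves implicit, which also follows directly from $\deg(z\,x_n)=\deg x_n+1>\deg x_{n-1}=\deg y_n$. The only slip is that $\|A_n\|<\rho$ should read $\|A_n\|\le\rho$ (equality occurs when $\lambda_2\ge 2\lambda_1$), which is harmless since $\rho<1$.
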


\begin{proof}
Let $(x_0,y_0,z_0)\in \mathbb A^3$ be an arbitrary point.
Denote $x_n:=\gamma^n(x_0)$. Expanding the recursive relations one obtains

\begin{enumerate}
\item $  z_n=\lambda_3^n z_0$
\item $ y_n=\lambda_2 x_{n-1} $
\item $ x_n=\lambda_1(y_{n-1}+x_{n-1}z_{n-1})= \lambda_1(\lambda_2(x_{n-2})+\lambda_3^{n-1}z_0 x_{n-1})$
\end{enumerate}

For $\lambda_3^{n-1}z_0<\lambda_2$ one has $  |x_n|< \lambda_1 \lambda_2(|x_{n-2}|+|x_{n-1}|)$. Now note that $ \lambda_2( |x_{n-2}|+|x_{n-1}|)<\max \{|x_{n-2}|,|x_{n-1}|\} $, hence $ |x_n|<\lambda_1\max \{|x_{n-1}|,|x_{n-2}|\}$. This sequence converges to zero as $ \lambda_1^{n-2}\max\{x_1,x_0\}$. Thus $\lim_{n\to +\infty}|x_n|=\lim_{n\to +\infty}|y_n|=\lim_{n\to +\infty}|z_n|=0$, so we proved that $ \gamma $ is a contraction.

To compute the growth rate of degree, denote $ \gamma^n:=(F_n,G_n,H_n)$. Now
$ \deg \gamma^{n+1}=\max \{\deg F_{n+1},\deg G_{n+1},\deg H_{n+1}\}$, where $\deg H_{n+1}=1$, $ \deg F_{n+1} =\max\{\deg F_{n}+1, \deg F_{n-1}\}$, $ \deg G_{n+1}=\deg F_n$. Hence $  \deg \gamma^{n+1}=\deg \gamma^n+1$, and finally $ \deg \gamma^n = n+1 $.

\end{proof}

\begin{remark}
The theorem still holds and its proof goes through for any norm on $ \mathbb A^3 $ over a normed field.
\end{remark}

\begin{remark}
This example can be used to produce a contraction with unbounded degree growth in any higher dimension. Indeed, define it as $ \gamma $ on the first three coordinates, and by scaling by $\lambda_i$, $ i>3 $, on the rest, with some $0 <\lambda_i< 1$.
\end{remark}

\begin{remark}
The Jacobian matrix of this map is
$$D_0\gamma =\begin{pmatrix}
\lambda_1 z & \lambda_1 & \lambda_1 x\\
\lambda_2 & 0 & 0\\
0 & 0 & \lambda_3
\end{pmatrix} $$

Its determinant is constant and equals $ -\lambda_1 \lambda_2 \lambda_3 $. The eigenvalues of the differential in $0$ are $\pm \sqrt{\lambda_1\lambda_2}$ and $\lambda_3$.
\end{remark}

\section{Holomorphically linearizable automorphisms of $\mathbb C^n$}

In this section we will recall the Poincar\'e--Dulac theorem \cite[Theorem 27.38]{OVM}. 

\begin{definition}
\label{resonance}
A linear map with eigenvalues $\alpha_1,\dots \alpha_d $ is called resonant if for some $ i $ one has $ \alpha_i=\alpha_1^{m_1}\dots \alpha_d^{m_d} $, where all $ m_j\ge 0$ and $m_1+\dots +m_d \ge  2$. 
\end{definition}

\begin{theorem}
Let $\gamma$ be an invertible holomorphic contraction of $\mathbb C^n$ centered at $0$, with $n \ge 3$. Assume that the differential
$D_0\gamma \in GL(T_0\mathbb{C}^n)$
is not resonant. Then there exists a holomorphic automorphism
$U : \mathbb{C}^n \to \mathbb{C}^n$ 
such that
\[
U \circ \gamma \circ U^{-1}
\]
is linear.
\end{theorem}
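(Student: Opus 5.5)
The plan is the classical two-stage argument: formal normal form first, then globalisation by the dynamics. Write $A=D_0\gamma$. Because $\gamma$ is a contraction and $0$ is its unique fixed point, every eigenvalue of $A$ satisfies $0<|\alpha_i|<1$; this follows from the stable manifold theorem, since an eigenvalue of modulus $\ge 1$ would give a local centre or unstable direction along which orbits do not tend to $0$. So $A$ lies in the Poincaré domain. I will look for a local holomorphic $U$ with $U\circ\gamma=A\circ U$ and $D_0U=\mathrm{Id}$, and then extend it to all of $\mathbb C^n$.

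\emph{Local step.} Write $U=\mathrm{Id}+\sum_{k\ge2}U_k$ with $U_k$ homogeneous of degree $k$, and solve degree by degree. The equation in degree $k$ reads $U_k\circ A - A\,U_k = R_k$, where $R_k$ is determined by $U_2,\dots,U_{k-1}$ and by $\gamma$. Diagonalise $A$, or put it in Jordan form and handle the nilpotent part by a triangular filtration. The operator $L_k(U)=U\circ A-AU$ acting on the monomial vector field $x^m e_i$ then has eigenvalue $\alpha^m-\alpha_i$. Non-resonance means these are all nonzero, so a formal solution exists. Convergence uses that $A$ is in the Poincaré domain. Since $|\alpha^m|\to 0$ as $|m|\to\infty$, the divisors satisfy $|\alpha^m-\alpha_i|\ge c>0$ uniformly in $m$. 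A majorant-series argument in the style of Poincaré then shows that $U$ converges on some ball $B_r$. This small-divisor-free convergence estimate is the only genuinely analytic point. I expect it to be the main technical obstacle, especially the bookkeeping for the nilpotent part of a non-diagonalisable $A$.

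\emph{Globalisation.} Define $U(x)=A^{-N}U(\gamma^N x)$, where $N$ is chosen so that $\gamma^N x\in B_r$. Such an $N$ exists locally uniformly: $\gamma$ is a contraction, so the sets $\gamma^{-N}(B_r)$ form an increasing union of open sets exhausting $\mathbb C^n$, and every compact set lies in one of them. The definition does not depend on $N$ because of the local conjugacy relation $U\circ\gamma=A\circ U$. Hence $U$ is a holomorphic map $\mathbb C^n\to\mathbb C^n$ satisfying $U\circ\gamma=A\circ U$ everywhere.

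\emph{Injectivity and surjectivity.} Injectivity: if $U(x)=U(y)$, apply the relation $N$ times to get $U(\gamma^Nx)=U(\gamma^Ny)$ with both points in $B_r$. The map $U$ is injective on $B_r$ after shrinking $r$, since $D_0U=\mathrm{Id}$; so $\gamma^Nx=\gamma^Ny$, and $x=y$ because $\gamma$ is invertible. Surjectivity: given $w$, choose $N$ with $A^Nw\in U(B_r)$, which is possible since $A^N\to0$ and $U(B_r)$ contains a neighbourhood of $0$. Then $w=A^{-N}U(y)=U(\gamma^{-N}y)$. Thus $U$ is a holomorphic bijection, hence a biholomorphism, and $U\circ\gamma\circ U^{-1}=A$ is linear.

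The assumption $n\ge3$ plays no role in this argument.
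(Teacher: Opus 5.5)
The paper gives no proof of this statement; it quotes it from Ornea--Verbitsky \cite[Theorem 27.38]{OVM}. Your outline is the classical proof behind that citation: Poincar\'e linearization in the Poincar\'e domain, followed by extension along the dynamics via $U=A^{-N}\circ U\circ\gamma^N$. It has the merit of being self-contained and of showing that $n\ge 3$ is irrelevant. It is sound in substance, but two steps need repair.

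\emph{First repair: the spectral bound.} Your reason for $|\alpha_i|<1$ is not valid as written.
A modulus-one eigenvalue does not force orbits along the centre direction to stay away from $0$. For a parabolic germ such as $(z,w)\mapsto(z-z^2,w/2)$, orbits in the attracting petal do tend to $0$. For $|\alpha_i|>1$, the local unstable manifold only shows that orbits leave a small ball, not that they never return to $0$.

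Use instead the meaning of ``contraction centred at $0$'' in Ornea--Verbitsky: for every compact $K$ and every neighbourhood $V$ of $0$, $\gamma^N(K)\subset V$ for suitable $N$. Apply this to a closed ball about $0$. The Cauchy estimates then make $\|A^N\|=\|D_0\gamma^N\|$ arbitrarily small, hence $<1$, so the spectral radius of $A$ is $<1$. Under the paper's purely pointwise Definition 1 this step would need a separate argument. For the paper's examples it is moot, since the eigenvalues there are computed explicitly.

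\emph{Second repair: forward invariance of $B_r$.} Two of your claims require $\gamma(B_r)\subset B_r$: that the sets $\gamma^{-N}(B_r)$ increase, and that $A^{-N}U(\gamma^N x)$ is independent of $N$. To get $U(\gamma^j y)=A^jU(y)$ for $y\in B_r$ you iterate the local relation, and this needs every intermediate point $\gamma^i y$ to stay in $B_r$. For a Euclidean ball this can fail when $A$ is far from normal, for instance when $\|A\|>1$ although the spectral radius is $<1$.

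The fix is to take $B_r$ to be a ball for an adapted norm with $\|A\|<1$.
\begin{itemize}
\item Then $\|\gamma(x)\|\le(\|A\|+Cr)\|x\|<\|x\|$ on $B_r$ for small $r$, so $\gamma(B_r)\subset B_r$.
\item The identity $U\circ\gamma=A\circ U$ then holds on all of $B_r$ by the identity theorem.
\item Shrinking $r$ to get injectivity preserves the invariance.
\end{itemize}
With this, pointwise convergence alone gives $\mathbb C^n=\bigcup_N\gamma^{-N}(B_r)$ as an increasing union. Your globalisation, injectivity and surjectivity arguments then go through as written.

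\emph{Optional simplification of the local step.} The Jordan-block bookkeeping you anticipate can be avoided. Solve the conjugacy equation only up to a large finite order $N$ by a polynomial $P$; this is pure linear algebra and uses non-resonance. Then set $U=\lim_k A^{-k}\circ P\circ\gamma^k$. On the invariant ball the increments are $O\bigl((\|A^{-1}\|\,\theta^{N+1})^k\bigr)$ with $\theta=\|A\|+Cr<1$. This converges once $N$ is large, and it converges locally uniformly on all of $\mathbb C^n$ at once.
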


Recall that the eigenvalues of the differential at zero of the polynomial contractions constructed in the previous section are $\sqrt{\lambda_1\lambda_2}, -\sqrt{\lambda_1\lambda_2}, \lambda_3,\dots \lambda_d $. If we chose all $ \lambda_i $ algebraically independent, then eigenvalues cannot satisfy the relations of Definition \ref{resonance}. Thus we proved

\begin{theorem}
Let $ \gamma_d $ be an automorphism of $\mathbb C^d$, $d\ge 3$

$$\gamma: (x_1,x_2,x_3,\dots x_d)\mapsto  \left(\lambda_1(x_2+x_1x_3),\lambda_2 x_1,\lambda_3 x_3,\dots \lambda_d x_d \right), 0<\lambda_i<\frac{1}{2} $$
where $ \lambda_i $ are algebraically independent.  Then
\begin{enumerate}

\item There exists a holomorphic diffeomorphism $ U $ such that $ U\circ\gamma_d\circ U^{-1} $ is linear.
\item There are no polynomial automorphisms with this property.
\end{enumerate}
\end{theorem}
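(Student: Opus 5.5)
The proof has two independent parts. Part 1 will follow from the Poincar\'e--Dulac theorem quoted above, and part 2 from Proposition 1 (strict algebraicity is equivalent to bounded degree growth) together with the remark that strict algebraicity is invariant under polynomial conjugation.

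For part 1, I first check that $\gamma_d$ is a contraction. On the first three coordinates this is Theorem 3. The remaining coordinates are scaled by $\lambda_i<1/2$, so they tend to zero. The origin is the only fixed point: a fixed point must have $x_3=\dots=x_d=0$, and then the map reduces to the linear map $(x_1,x_2)\mapsto(\lambda_1x_2,\lambda_2x_1)$, whose only fixed point is $0$ because $\lambda_1\lambda_2\ne1$.

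Next I need non-resonance of $D_0\gamma_d$, whose eigenvalues are $\pm\sqrt{\lambda_1\lambda_2},\lambda_3,\dots,\lambda_d$. A resonance $\alpha_i=\prod\alpha_j^{m_j}$ with $\sum m_j\ge2$ becomes, after squaring, a monomial identity among $\lambda_1,\dots,\lambda_d$ up to a sign. Write $k=m_1+m_2$, the total exponent carried by the two eigenvalues $\pm\sqrt{\lambda_1\lambda_2}$.
\begin{itemize}
\item If $\alpha_i=\lambda_r$ with $r\ge3$, the squared identity is $\lambda_r^2=\pm(\lambda_1\lambda_2)^{k}\prod_{j\ge3}\lambda_j^{2m_j}$.
\item If $\alpha_i=\pm\sqrt{\lambda_1\lambda_2}$, the squared identity is $\lambda_1\lambda_2=\pm(\lambda_1\lambda_2)^{k}\prod_{j\ge3}\lambda_j^{2m_j}$.
\end{itemize}
By algebraic independence, the exponents of each $\lambda$ must match on the two sides, and this forces the total degree $\sum m_j=1$, contradicting $\sum m_j\ge 2$. (The sign is harmless, since $\pm$ of a monomial equal to a distinct monomial is still a nontrivial polynomial relation.) Since all $\lambda_i$ are positive reals, the theorem then gives the holomorphic $U$.

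For part 2, suppose $F$ is a polynomial automorphism with $F\circ\gamma_d\circ F^{-1}=L$ linear. Then $\deg L^n=1$ for all $n$, so $L$ is strictly algebraic by Proposition 1. By the remark, $\gamma_d=F^{-1}\circ L\circ F$ is then also strictly algebraic, so $\sup_n\deg\gamma_d^n<\infty$. This contradicts the degree computation. The degree of $\gamma_d^n$ equals that of the three-dimensional map in Theorem 3, namely $n+1$, because the extra coordinates are linear and do not interact with $x_1,x_2,x_3$.

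The only step needing care is the resonance exclusion, specifically making sure the square roots $\pm\sqrt{\lambda_1\lambda_2}$ are handled correctly. Squaring removes the square roots and reduces the problem to an exponent comparison of monomials in the independent $\lambda$'s. Everything else is a direct citation of earlier results.
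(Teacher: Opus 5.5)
Your proposal is correct and follows the paper's own route. Part 1 uses Poincar\'e--Dulac with non-resonance coming from algebraic independence. Part 2 uses the invariance of bounded degree growth under polynomial conjugation (Proposition 1 and the remark) against $\deg\gamma_d^n=n+1$. Your explicit contraction check in dimension $d$ and your exponent comparison only fill in details the paper leaves implicit. One small simplification: since $\alpha_1^2=\alpha_2^2=\lambda_1\lambda_2$, the squared identity carries no sign, so the $\pm$ and your parenthetical about it can be dropped.
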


\bibliographystyle{plain}
\bibliography{biblio}

\end{document}